\newtheorem{theorem}{Theorem}[section]
\newtheorem{proposition}[theorem]{Proposition}
\newtheorem{conjecture}[theorem]{Conjecture}
\theoremstyle{definition}
\newtheorem{definition}[theorem]{Definition}
\begin{document}
	
\title{Markov's equation is not partition regular}
\author[1]{Tianyi Tao\thanks{Email: tytao20@fudan.edu.cn}}
\author[2]{Bohan Yang\thanks{Email: bhyang@simis.cn}}
\affil[1]{School of Mathematical Sciences\\Fudan University\\Shanghai 200433, P. R. China}
\affil[2]{Shanghai Institute for Mathematics and Interdisciplinary Sciences (SIMIS)\\Shanghai 200433, P. R. China}

\date{\today}
\maketitle

\begin{abstract}
Markov's equation \(x^2 + y^2 + z^2 = 3xyz\) is a widely studied topic in number theory, and the structure of its solutions has profound connections with mathematical fields such as combinatorics, hyperbolic geometry, approximation theory, and cluster algebras. In this paper, we prove that Markov's equation is not partition regular, which also confirms a necessary condition for the Uniqueness Conjecture.
\end{abstract}

%The following code is not run because of the percentage sign, but you might find it useful for future work
% \tableofcontents

\section{Introduction}

\begin{definition}\label{pr}
   A Diophantine equation \(f(x_1, \cdots, x_n) = 0\) is said to be \textit{partition regular} over \(\mathbb{N}\) if, for any finite partition of \(\mathbb{N}\) into \(\mathbb{N} = \bigcup\limits_{i=1}^m A_i\), there exists a cell \(A_i\) such that \(A_i\) contains infinitely many solutions to the equation. 
\end{definition}

Research on partition regularity problems originated from Schur's theorem \cite{schur1916}, which proved that equation \(x + y = z\) is partition regular. Later, Rado \cite{rado} gave the necessary and sufficient conditions for the partition regularity of homogeneous linear systems \(\bm{A}\bm{x} = \bm{0}\). However, the problem of partition regularity for inhomogeneous equations is much more difficult, among which the most studied one is presumably the partition regularity of the Pythagorean equation \(x^2 + y^2 = z^2\) \cite{graham2007some}, which remains an open problem to this day; recent progress on it can be found in \cite{frantzikinakis2017higher,frantzikinakis2025partition}. In addition, the method used by Bergelson in \cite{bergelson1996polynomial} can be widely applied to handle partition regularity problems where a linear term of some parameter in a linear equation is replaced by a polynomial term. Di Nasso and Luperi Baglini discussed the partition regularity of many inhomogeneous equations in \cite{di2018ramsey}.

In contrast, many works negate the partition regularity of certain specific equations. For example, Csikvári et al.\cite{csikvari2012density} proved that the equation \(x + y = z^2\) is not partition-regular; Di Nasso and Riggio \cite{di2018fermat} proved that a wide class of Fermat-like equations are not partition regular, such as \(x^3 + y^4 = z^5\). Note that in \cite{di2018ramsey}, there are several methods for determining that a Diophantine equation is not partition-regular, but these methods all fail to apply to the Markov equation of interest in this paper.

The Markov equation \(x^2 + y^2 + z^2 = 3xyz\) is a pivotal object in number theory. This equation is known to form deep connections that span diverse fields, including the Diophantine approximation, hyperbolic geometry, dynamical systems, and combinatorics, as discussed in \cite{CF89,DCCS21}.

The recursive structure of its positive integer solutions (Markov triples) is linked to binary indefinite quadratic forms, a connection first identified by Markov \cite{Markov1880}. Later, Perron \cite{Perron21} further explored this link using continued fractions. Geometrically, Cohn \cite{Cohn71,Cohn78} and Haas \cite{Haas} established a correspondence between Markov triples and geodesics on certain surfaces such as \(SL(2,\mathbb{R})/SL(2,\mathbb{Z})\). These works play an important role in the study of closed geodesics on Riemannian surfaces.
In combinatorial group theory, Cohn \cite{Cohn72,Cohn93} analyzed these structures using words and trace algebra \cite{Fricke1896}, while all Markov triples can be represented as vertices of an infinite binary tree \cite{Cohn79}. Together, these cross-field links solidify the role of the equation as a cross-disciplinary hub in mathematics.

\begin{definition}
A \textit{Markov triple} is a triple of positive integers \((x, y, z)\) satisfying the Markov equation \(x^2 + y^2 + z^2 = 3xyz\). A \textit{Markov number} is a positive integer appearing in some Markov triple; we denote the set of all Markov numbers by \(\mathcal{M}\).
\end{definition}

\begin{proposition}[Corollary 3.5 of \cite{aigner2013markov}]
Every Markov number is the maximum element in at least one Markov triple.
\end{proposition}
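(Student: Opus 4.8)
The plan is to use Vieta jumping to perform a descent that keeps the given Markov number fixed while strictly decreasing the maximum entry of the triple containing it. First I would record the basic jumping move: if $(x,y,z)$ is a Markov triple, then fixing $x$ and $y$ turns the equation into the quadratic $z^2 - 3xy\,z + (x^2+y^2) = 0$, whose two roots $z$ and $z'$ satisfy $z + z' = 3xy$ and $zz' = x^2 + y^2$ by Vieta's formulas. The partner $z' = 3xy - z$ is again a positive integer, since $z' = (x^2+y^2)/z > 0$ gives positivity and $z' = 3xy - z \in \mathbb{Z}$ gives integrality, so $(x,y,z')$ is another Markov triple.

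Next I would set up the descent itself. Let $m$ be a Markov number, so by definition it lies in some triple $T$. If $m = \max T$ we are done; otherwise $m$ is one of the two non-maximal coordinates, and writing $T = (x,y,z)$ with $z = \max T$ and $m \in \{x,y\}$, I replace $z$ by $z' = 3xy - z$. Because $z$ is the larger root of the quadratic we have $z' \le z$, and since $x,y < z$ the new maximum $\max\{x,y,z'\}$ is strictly smaller than $z$; meanwhile $m$ is untouched, so it still belongs to the new triple and hence $\max\{x,y,z'\} \ge m$. Iterating this move yields a sequence of triples all containing $m$ whose maxima form a strictly decreasing sequence of integers bounded below by $m$. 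Such a sequence is finite, and it can only terminate when the maximum equals $m$: it can never drop strictly below $m$ precisely because $m$ remains in every triple. At that moment $m$ is the maximum element of a Markov triple, as claimed.

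The one technical point to pin down is the strict inequality $z' < z$ together with the uniqueness of the maximal entry used in the step above, both of which require that the largest entry be attained only once. Here I would invoke the classical fact that the only Markov triples with a repeated entry are $(1,1,1)$ and $(1,1,2)$; writing $z = kx$ in a putative triple $(x,x,z)$ reduces the equation to $k(3x-k) = 2$, forcing exactly these two solutions. Both exceptional numbers are handled directly, since $1 = \max(1,1,1)$ and $2 = \max(1,1,2)$, and for every other Markov number all entries of every containing triple are distinct, so the descent strictly decreases the maximum at each stage and terminates. I expect the only thing that genuinely needs care is verifying that the descent cannot undershoot $m$, which is guaranteed by the persistent bound $\max \ge m$; everything else is routine once Vieta jumping is in place.
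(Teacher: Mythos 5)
The paper itself offers no proof of this proposition (it is imported from Aigner's monograph, Corollary 3.5), so your proposal has to stand on its own. The Vieta-jumping descent is the right and standard strategy, and most of it is set up correctly: the partner root $z' = 3xy - z = (x^2+y^2)/z$ is a positive integer, so $(x,y,z')$ is again a Markov triple; the classification of repeated-entry triples via $k(3x-k)=2$ is correct; and the termination bookkeeping (maxima strictly decreasing, bounded below by $m$ because $m$ survives every jump, so the process can only stop when $m$ itself is the maximum) is sound.

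The genuine gap is the step you flag and then dispose of too quickly: why is the maximal entry $z$ the \emph{larger} of the two roots of $t^2 - 3xy\,t + (x^2+y^2)$? You assert this in the second paragraph, and in the third you claim that both $z' < z$ and the uniqueness of the maximum ``require that the largest entry be attained only once,'' i.e.\ that distinctness of the entries suffices. It does not: distinctness is a statement about $x,y,z$, while $z' < z$ compares $z$ with the \emph{other} root, and nothing in your classification of repeated-entry triples connects the two. That classification only removes the known counterexamples (for $(1,1,1)$ the maximum really is the smaller root, since the roots are $1$ and $2$); it supplies no positive argument for the remaining triples. The missing ingredient is the standard evaluation of the quadratic at the middle entry: if $1 \le x < y < z$, then
\[
f(y) \;=\; y^2 - 3xy\cdot y + x^2 + y^2 \;=\; x^2 + 2y^2 - 3xy^2 \;\le\; x^2 - y^2 \;<\; 0,
\]
so $y$ lies strictly between the two roots $z'$ and $z$; since $y < z$, the root $z$ must be the larger one, and in fact $z' < y < z$, so the new maximum is $\max\{x,y\} < z$. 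This two-line computation is the engine of the entire descent; with it inserted, your proof is complete.
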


In 1913, Frobenius \cite{Frobenius13} proposed an important conjecture that two distinct Markov triples cannot share the same maximum element. This conjecture has remained open for over a century and holds significant importance in the field.

\begin{conjecture}[Uniqueness Conjecture]
Every Markov number is the maximum element in exactly one Markov triple.
\end{conjecture}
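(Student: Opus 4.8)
The plan is to reduce the Uniqueness Conjecture to an arithmetic statement about each individual Markov number and then attempt a descent; I should say at the outset that this is the century-old Frobenius problem, so I expect the plan to isolate---rather than overcome---the genuine obstruction. Existence of at least one triple with a prescribed maximum is already supplied by the preceding Proposition, so the whole content is the \emph{at most one} direction: if $(a,b,c)$ and $(a',b',c)$ are Markov triples both having $c$ as their largest entry, then $\{a,b\}=\{a',b'\}$.

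First I would record the tree structure. By Vieta jumping---fixing two entries and solving $x^2+y^2+z^2=3xyz$ as a quadratic in the third---each triple has three neighbours; jumping the largest entry strictly decreases the maximum, while the other two jumps increase it. Hence the Markov triples form an infinite binary tree rooted at $(1,1,1)$ in which every vertex has a unique parent and two children, and the maximal entry strictly increases away from the root. Since each Markov number is the maximum of at least one vertex, the Uniqueness Conjecture is exactly the assertion that the map sending a vertex to its largest coordinate is \emph{injective}.

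Next I would pass to the congruence picture, where the arithmetic leverage lies. Markov triples are pairwise coprime, and reducing the equation modulo $c$ gives $a^2+b^2\equiv 0 \pmod c$, so $u:=ab^{-1}\bmod c$ is a square root of $-1$ modulo $c$ (consistently, every odd prime factor of a Markov number is $\equiv 1\pmod 4$). One checks conversely that the unordered pair $\{a,b\}$ is recovered from $c$ and $u$ under the constraint $0<a,b<c$, so a second triple $(a',b',c)$ with the same maximum forces a \emph{different} square root $u'$ of $-1$ modulo $c$. It therefore suffices to show that among the square roots of $-1$ modulo $c$---whose number grows with the number of distinct prime factors of $c$---only one is realised by a Markov triple. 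When $c$ is a prime power or twice a prime power there are essentially no competing roots and the argument closes at once, recovering the cases already settled in the literature.

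The hard part, and the reason the conjecture has stood for over a century, is the composite case where many square roots of $-1$ are a priori admissible. The natural attack is a minimal-counterexample descent: take the smallest $c$ that is the maximum of two distinct triples and apply the parent map (the Vieta jump on the maximum) to each, obtaining triples with maxima $b$ and $b'$. But $b$ and $b'$ need not be equal, nor need either be the maximum of two triples, so minimality of $c$ is not contradicted: the two triples branch into different subtrees and the descent splits instead of colliding. To force the collision one would need a monotonicity principle linking the tree order to the ordering of maximal entries---equivalently, a way to pin down exactly which square root of $-1$ modulo $c$ the recursion selects at composite $c$---and no such control is currently available. This is precisely why I would not expect the plan to close unconditionally, and why the present paper instead establishes the weaker but unconditional necessary condition that Markov's equation is not partition regular.
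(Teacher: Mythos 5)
You were asked to prove a statement that the paper itself presents only as a conjecture: this is Frobenius's Uniqueness Conjecture from 1913, which remains open, so there is no proof in the paper to compare yours against. The paper only ever uses the conjecture as a \emph{hypothesis} (its Proposition shows that uniqueness would imply non-partition-regularity, via a parity-of-depth coloring of the Markov tree), and its actual theorem --- the 9-coloring of \(\mathbb{N}\) built from residues and 5-adic valuations --- is proved unconditionally, bypassing uniqueness entirely. Your proposal correctly recognizes all of this and, to your credit, never claims to close the argument. The material you assemble is accurate and matches the known partial results: the Vieta-jumping binary tree rooted at \((1,1,1)\); the reduction modulo the maximum \(c\) (legitimate by the pairwise coprimality in Proposition \ref{coprime}) to a square root \(u\) of \(-1\) modulo \(c\); the recovery of \(\{a,b\}\) from the pair \((c,u)\) --- though note that this step is a genuine lemma about Markov forms, treated in \cite{aigner2013markov}, not the routine verification your phrase ``one checks'' suggests; and the settled cases where \(c\) is a prime power or twice a prime power, which are exactly the results of \cite{Baragar96,button1998uniqueness,button2001markoff} cited in the paper.

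The gap is the one you name yourself, and it is the century-old obstruction rather than an oversight: nothing in the proposal forces the two hypothetical triples with the same maximum to collide under the parent map, and nothing identifies which square root of \(-1\) modulo \(c\) the Markov recursion actually realizes when \(c\) is composite. Minimality of a counterexample gives no contradiction because the two parents generally have different maxima, so the descent branches instead of colliding --- precisely as you describe. The only actionable criticism is therefore one of framing: what you have written is a reduction and a survey of the difficulty, not a proof attempt, and it should be labeled as such. If the goal is to contribute to this paper, the profitable target is its unconditional theorem (the mod-5 coloring argument), whose correctness is independent of everything in your proposal.
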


The Uniqueness Conjecture is critical, as its validity would imply rigidity in multiple domains: it would enforce the uniqueness of certain binary quadratic forms and the unique equivalent class of simple closed geodesics with the same length in certain surfaces.

Despite extensive computer verification up to $10^{140}$ \cite{ Baragar96,Borosh75}, and moreover, proofs for Markov numbers of specific forms \cite{Baragar96, button1998uniqueness, button2001markoff, CC13}, the conjecture is nevertheless unresolved in its full generality. For more background and recent progress on the Uniqueness Conjecture and Markov's equation, readers are referred to \cite{aigner2013markov}.

\begin{proposition}
If the Uniqueness Conjecture holds, then Markov's equation is not partition regular over \(\mathbb{N}\).
\end{proposition}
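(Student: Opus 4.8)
The plan is to construct an explicit finite colouring of $\mathbb{N}$ under which only finitely many Markov triples are monochromatic; this immediately shows that no cell of the induced partition contains infinitely many solutions. A solution lies in a cell $A_i$ precisely when all three of its coordinates do, so it suffices to colour $\mathbb{N}$ so that in every sufficiently large triple two of the three entries receive different colours. I would exploit the standard descent/tree structure of Markov triples (see \cite{aigner2013markov}): sorting each triple as $x \le y \le z$, the map $(x,y,z) \mapsto (x,y,\,3xy-z)$ strictly decreases the maximum of any non-root triple and, iterated, terminates at $(1,1,1)$; thus the Markov triples form a rooted binary tree $T$ in which the parent of a node is obtained by applying this Vieta reduction to the largest coordinate. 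The idea is to transport the depth-parity $2$-colouring of $T$ to a colouring of $\mathcal{M}$ itself, and it is exactly here that the Uniqueness Conjecture is needed.

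The key structural step is the claim that if $(x,y,z)$ with $x\le y\le z$ is a non-root triple, then its parent triple $\{x,y,\,3xy-z\}$ has largest element equal to $y$. Writing $z' = 3xy-z$, I would argue that $z' \le y$: since $z'z = x^2+y^2$ and $z' < z$ (the reduction lowers the maximum), were $z' > y$ the parent triple would sort as $(x,y,z')$ with maximum $z'$, and then applying the Vieta reduction to \emph{its} maximum would return $3xy - z' = z > z'$, contradicting that this operation strictly decreases the maximum. Hence $z' \le y$ and the parent's maximum is $y$. Now assume the Uniqueness Conjecture. By the cited Proposition every Markov number is the maximum of a triple, and uniqueness makes it the maximum of exactly one triple $t(m)$; write $\nu(m)$ for the middle entry of $t(m)$. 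The claim then says that the parent of $t(m)$ in $T$ is precisely $t(\nu(m))$, so the function $d(m) := \operatorname{depth}_T\!\big(t(m)\big)$ is a well-defined map $\mathcal{M} \to \mathbb{Z}_{\ge 0}$ satisfying $d(\nu(m)) = d(m) - 1$ for every non-root $m$.

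I would then define the colouring $c$ by setting $c(m) = d(m) \bmod 2$ for $m \in \mathcal{M}$ and assigning every element of $\mathbb{N}\setminus\mathcal{M}$ one further fixed colour (such numbers occur in no triple, so their colour is irrelevant). For any Markov triple other than the finitely many degenerate ones near the root, its maximum $m$ and its middle entry $\nu(m)$ satisfy $d(\nu(m)) = d(m)-1$, whence $c(\nu(m)) \ne c(m)$; since both $m$ and $\nu(m)$ belong to the triple, the triple is not monochromatic. Thus at most finitely many Markov triples are monochromatic under $c$, and in particular no cell contains infinitely many solutions, so the equation is not partition regular.

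The routine ingredients are the tree structure and the short inequality $z' \le y$, both essentially classical. The one genuinely delicate point—and the sole place the Uniqueness Conjecture enters—is the passage from a colouring of the tree of triples to a colouring of the set of numbers: without uniqueness a single Markov number could be the maximum of several triples sitting at different depths, so $d(m)$ (and hence the parity colour) would not be well-defined. I expect the main work in a fully rigorous write-up to be handling the base of the tree carefully (the triples $(1,1,1)$ and $(1,1,2)$ with repeated entries, where the notion of "middle entry" degenerates) and confirming that these account for all finitely many possible monochromatic exceptions.
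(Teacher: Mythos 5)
Your proposal is correct and is essentially the paper's own argument: under the Uniqueness Conjecture you colour each Markov number by the parity of the depth of the unique triple in which it is maximal, and observe that the maximum and middle entry of any non-degenerate triple sit at consecutive depths, so no cell contains more than finitely many solutions. In fact your write-up is more detailed than the paper's, which leaves implicit the key lemma you prove (that the Vieta parent of a triple has the middle entry as its maximum) and the treatment of the degenerate triples $(1,1,1)$ and $(1,1,2)$.
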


\begin{proof}
From Chapter 3 of \cite{aigner2013markov}, all Markov triples form a tree. When the Uniqueness Conjecture holds, each Markov number \(m\) appears as the largest element in exactly one Markov triple; let \(d(m)\) denote the depth of this triple in the tree (the distance from the root node \((1,1,1)\)).

Let \(A = \{ m \in \mathcal{M} : d(m) \equiv 0 \pmod{2} \}\) and \(B = \mathbb{N} \setminus A\). Then Markov's equation has no non-trivial solutions in either \(A\) or \(B\).

\end{proof}

\section{The proof}

In this section, we prove that Markov's equation is not partition regular without relying on the Uniqueness Conjecture, which is inspired by \cite{csikvari2012density}, \cite{di2018fermat}, and the following proposition. Notably, our proof combines the traditional modular method with the structural properties of Markov's equation itself.

\begin{proposition}[Corollary 3.4 of \cite{aigner2013markov}]\label{coprime}
 If $(x,y,z)$ is a Markov triple, then $\gcd(x,y,z)=1$.   
\end{proposition}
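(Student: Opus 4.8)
The plan is to reduce the statement to the fact that the generalized Markov--Hurwitz equation $a^2+b^2+c^2 = m\,abc$ has no solutions in positive integers once $m \ge 4$, and then to establish that fact by descent. Concretely, suppose for contradiction that $(x,y,z)$ is a Markov triple with $d := \gcd(x,y,z) > 1$, and pick a prime $p \mid d$. Writing $x = p\hat x$, $y = p\hat y$, $z = p\hat z$ and substituting into $x^2+y^2+z^2 = 3xyz$, a factor $p^2$ cancels and I obtain
\[
\hat x^2 + \hat y^2 + \hat z^2 = 3p\,\hat x\hat y\hat z,
\]
a positive-integer solution of the Markov--Hurwitz equation with multiplier $m = 3p \ge 6$. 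So it suffices to rule out positive solutions whenever $m \ge 4$.

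For this I would run the classical Vieta jumping. Among all positive solutions of $a^2+b^2+c^2 = m\,abc$ I choose one minimizing $a+b+c$, ordered so that $a \le b \le c$. Viewing the equation as the quadratic $c^2 - (mab)c + (a^2+b^2) = 0$ in $c$, Vieta's formulas produce a second root $c' = mab - c = (a^2+b^2)/c$. Since $mab-c$ is an integer and $(a^2+b^2)/c$ is positive, $c'$ is a positive integer, so $(a,b,c')$ is again a solution. Minimality of $a+b+c$ then forces $c' \ge c$, i.e.\ $(a^2+b^2)/c \ge c$, which yields the key inequality $c^2 \le a^2 + b^2$.

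Now I squeeze. From $a \le b$ and $c^2 \le a^2+b^2 \le 2b^2$ I get $a^2+b^2+c^2 \le 4b^2$, while $c \ge b$ gives $m\,abc \ge m a b^2$. Combining with the equation yields $m a b^2 \le 4b^2$, hence $m a \le 4$. But $m = 3p \ge 6$ and $a \ge 1$ force $ma \ge 6 > 4$, a contradiction. Hence no common prime factor $p$ can exist, and $\gcd(x,y,z) = 1$.

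The main obstacle, and essentially the only delicate point, is the descent step: one must verify that the Vieta conjugate $c'$ is a genuine positive integer (so that $(a,b,c')$ is a legitimate competitor in the minimization) and that lowering the largest entry strictly lowers $a+b+c$, so that minimality really does deliver $c' \ge c$. Once the inequality $c^2 \le a^2+b^2$ is secured, the remaining estimates are elementary; the multiplier being $\ge 6$ rather than merely $\ge 4$ makes the final contradiction immediate and spares us any boundary analysis.
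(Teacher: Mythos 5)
Your proof is correct, but you should know that the paper itself gives no proof of this proposition at all: it is quoted as Corollary 3.4 of Aigner's monograph, where coprimality comes from the tree structure of Markov triples --- pairwise coprimality is preserved under the Vieta moves $z \mapsto 3xy - z$ and so propagates by induction from the root $(1,1,1)$. Your argument is a genuinely different, self-contained route: cancelling a common prime $p$ produces the Hurwitz-type equation $\hat x^2+\hat y^2+\hat z^2 = 3p\,\hat x\hat y\hat z$, and the descent at a sum-minimal solution is carried out correctly --- the conjugate root $c' = mab - c = (a^2+b^2)/c$ is a positive integer, minimality yields $c^2 \le a^2+b^2$, and the squeeze $ma \le 4$ contradicts $m = 3p \ge 6$. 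In effect you have proved the $m \ge 5$ case of Hurwitz's classical theorem on $a^2+b^2+c^2 = m\,abc$; your opening claim that the descent alone settles all $m \ge 4$ is a slight overstatement (for $m=4$, $a=1$ one still needs the boundary analysis you defer), but this is immaterial since only $m \ge 6$ is ever used. What your approach buys is independence from the tree of solutions and a stronger, more general statement; what the cited approach buys is the stronger conclusion of \emph{pairwise} coprimality with essentially no computation.
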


\begin{theorem}
    There is a 9-coloring of $\mathbb N$ such that $x^2+y^2+z^2=3xyz$ has no non-trivial homochromatic solution.
\end{theorem}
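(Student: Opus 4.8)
The plan is to construct an explicit 9-coloring by reducing modulo some integer $N$ and showing that the Markov congruence $x^2+y^2+z^2\equiv 3xyz\pmod N$ has no solution with $x\equiv y\equiv z$ in a nonzero residue class, while handling the residue class $0$ separately via the coprimality property. The natural first move is to look for a modulus $N$ such that, working in $\mathbb{Z}/N\mathbb{Z}$, the only way to satisfy $3c^2=9c^3$ (the homochromatic specialization $x=y=z=c$ at the residue-class level, i.e. $3c^2\equiv 3c\cdot c\cdot c$) forces $c$ into a controlled set. Since Proposition~\ref{coprime} guarantees $\gcd(x,y,z)=1$, a genuine Markov triple cannot have all three entries sharing a common prime factor; this is the key structural input that lets me discard the "all divisible by $p$" case that a purely modular argument cannot rule out on its own.

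\smallskip

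Concretely, I would first choose a small prime power and count solutions of the congruence $x^2+y^2+z^2\equiv 3xyz$ under the constraint $x\equiv y\equiv z\pmod N$. Setting $x=y=z=c$ reduces the equation to $3c^2\equiv 3c^3\pmod N$, i.e. $3c^2(c-1)\equiv 0$. I expect the useful modulus to be a prime $p$ (or a product of two primes) for which the residues $c$ solving $3c^2(c-1)\equiv 0\pmod p$ are exactly $c\equiv 0$ and $c\equiv 1$, together with possibly $c$ annihilating the factor $3$. The coloring would then assign colors according to the residue of $n$ modulo $N$, refined so that the "bad" residue classes (where homochromatic solutions could survive) are split apart; the residue class $0$ is the delicate one, since $x\equiv y\equiv z\equiv 0\pmod p$ is consistent with the congruence but incompatible with $\gcd(x,y,z)=1$ for a nontrivial Markov triple.

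\smallskip

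The main obstacle, and the reason the count lands on $9$ colors rather than fewer, will be simultaneously (i) ruling out the residue class $c\equiv 1$, where $3\cdot 1\equiv 3\cdot 1$ holds identically so a single modulus cannot separate it, and (ii) handling the common-divisor class $c\equiv 0$. For (ii) I would use coprimality: if $x\equiv y\equiv z\equiv 0\pmod p$ then $p\mid\gcd(x,y,z)=1$, a contradiction, so that class contributes no nontrivial homochromatic triple regardless of the congruence; this is exactly where the Markov-specific input enters and where generic modular obstructions for Fermat-like equations (as in \cite{di2018fermat}) do not suffice. For (i) I anticipate needing a second modulus or a finer partition of the residues, so that $c\equiv 1\pmod p$ splits across several color classes in a way that forbids $x\equiv y\equiv z$; combining two coprime moduli multiplicatively, or one modulus together with a coprimality-driven case, is the most likely route to the figure $9=3^2$. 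After fixing $N$ and the color assignment, verifying that no nonzero homochromatic residue $c$ satisfies $3c^2\equiv 3c^3\pmod N$ outside the coprimality-excluded class is a finite check that I would carry out explicitly.

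\smallskip

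I would finish by assembling the argument: define the coloring $\chi:\mathbb{N}\to\{1,\dots,9\}$ explicitly in terms of residues, then suppose for contradiction that $(x,y,z)$ is a nontrivial Markov triple with $\chi(x)=\chi(y)=\chi(z)$; deduce that $x,y,z$ lie in a common residue class mod $N$, substitute into the congruence, reach either a modular contradiction or a common-divisor contradiction with Proposition~\ref{coprime}, and conclude that no such homochromatic solution exists. The \emph{non-trivial} qualifier lets me exclude the base triple $(1,1,1)$ and its permutations, which is convenient since $c\equiv 1$ is the hardest residue to suppress.
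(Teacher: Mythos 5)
Your first half coincides with the paper's: color by residues mod $5$ (a prime for which $3c^2(c-1)\equiv 0$ forces $c\equiv 0$ or $c\equiv 1$), kill the classes $c\equiv 2,3,4$ by comparing $3c^2$ with $3c^3$, and kill $c\equiv 0$ via Proposition~\ref{coprime}, since a nontrivial Markov triple cannot have all entries divisible by $5$. That much is correct and is exactly the paper's treatment of its classes $A_0, A_2, A_3, A_4$.

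The gap is the class $c\equiv 1$, and it is fatal to the route you sketch. Every remedy you propose stays inside the modular world: a second prime, or ``two coprime moduli combined multiplicatively,'' is by the Chinese Remainder Theorem just a single modulus $N=pq$, and for \emph{any} modulus $N$ the class $x\equiv y\equiv z\equiv 1\pmod N$ satisfies $x^2+y^2+z^2\equiv 3\equiv 3xyz\pmod N$ identically; the coprimality argument is also powerless there, since elements $\equiv 1 \pmod N$ share no prime factor that the coloring controls. So the ``finite check'' you defer to will necessarily fail on $c\equiv 1$ no matter which $N$ you pick: no coloring of $\mathbb{N}$ by residue classes alone can prove the theorem, and your proposal contains no other mechanism (non-triviality only removes the single triple $(1,1,1)$, not the whole class). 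The paper's key idea, absent from your sketch, is a \emph{non-modular} refinement of the single class $1\pmod 5$: for $n>1$ in that class write $n-1=5^{v(n)}m(n)$ with $5\nmid m(n)$, and color $n$ by $m(n)\bmod 5$ (four colors), keeping $\{1\}$ as a ninth singleton color --- so the count is $9=4+4+1$, not $3^2$. If $x,y,z$ lie in one such class with common digit $i$, one compares $P-3=x^2+y^2+z^2-3$ and $Q-3=3xyz-3$: writing $v=\min(v(x),v(y),v(z))\ge 1$, both differences have $5$-adic valuation exactly $v$, say $P-3=5^v\lambda$ and $Q-3=5^v\mu$, and a case analysis on how many of $v(x),v(y),v(z)$ attain the minimum gives $(\lambda,\mu)\equiv(i,4i)$, $(4i,i)$ or $(2i,3i)\pmod 5$ --- never equal, since $i\not\equiv 0\pmod 5$. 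This valuation-and-leading-digit comparison is the missing idea; without something of this kind your plan cannot be completed.
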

\begin{proof}
We partition \(\mathbb{N}\) into 9 disjoint sets as follows:
\begin{itemize}
    \item \(A_i = \{ n \in \mathbb{N} : n \equiv i \pmod{5} \}\), \(i = 0, 2, 3, 4\);
    \item  \(B_i = \{ n \in \mathbb{N} : n = 5^{v(n)}m(n) + 1,\ 5 \nmid m(n),\ m(n) \equiv i \pmod{5} \}\), \(i = 1, 2, 3, 4\);
    \item \(C=\{1\}\).
\end{itemize}
Suppose \(x, y, z\) are nontrivial solutions to Markov's equation. By Proposition \ref{coprime}, \(x, y, z\) cannot all belong to \(A_0\); on the other hand, if \(x, y, z \in A_i\) for \(i \in \{2, 3, 4\}\), then \(x^2 + y^2 + z^2 \equiv 3i^2 \pmod{5}\) and \(3xyz \equiv 3i^3 \pmod{5}\), which leads to a contradiction.

In the following, suppose there exists \(i \in \{1, 2, 3, 4\}\) such that \(x, y, z \in B_i\). Let \(x = 5^{v(x)}m(x) + 1\), \(y = 5^{v(y)}m(y) + 1\), \(z = 5^{v(z)}m(z) + 1\). Denote \(P = x^2 + y^2 + z^2\) and \(Q = 3xyz\). Then  
\begin{itemize}
    \item \(
    \begin{aligned}[t]
    P &= 5^{2v(x)}m(x)^2 + 5^{2v(y)}m(y)^2 + 5^{2v(z)}m(z)^2 \\
    &+ 2 \cdot 5^{v(x)}m(x) + 2 \cdot 5^{v(y)}m(y) + 2 \cdot 5^{v(z)}m(z) + 3;
    \end{aligned}
    \)
    \item \(
    \begin{aligned}[t]
    Q &= 3 [ 5^{v(x)+v(y)+v(z)}m(x)m(y)m(z) \\
    &+5^{v(x)+v(y)}m(x)m(y) + 5^{v(x)+v(z)}m(x)m(z) + 5^{v(y)+v(z)}m(y)m(z) \\
    &+ 5^{v(x)}m(x) +  5^{v(y)}m(y) +  5^{v(z)}m(z) +1].
    \end{aligned}
    \)
\end{itemize}

Without loss of generality, assume \(v(x) \ge v(y) \ge v(z)=v\). Since 5 is a prime number and \(m(x), m(y), m(z)\) are congruent modulo 5, it follows that the 5-adic valuations of \(P - 3\) and \(Q - 3\) are both \(v\). Let \(P - 3 = 5^v \lambda\) and \(Q - 3 = 5^v \mu\); then

{\bf Case 1} \(v(x) = v(y) = v(z) = v.\)  

\(\lambda \equiv 2 m(x) + 2 m(y) + 2 m(z) \equiv  6i \equiv i \pmod{5}\), 

\(\mu \equiv 3 (m(x) + m(y) + m(z)) \equiv  9i \equiv 4i \pmod{5}\).  

{\bf Case 2} \(v(x) > v(y) = v(z) = v.\)  

\(\lambda \equiv 2 m(y) + 2 m(z) \equiv 4i \pmod{5}\),  

\(\mu \equiv 3 (m(y) + m(z)) \equiv 6i \equiv i \pmod{5}\).  

{\bf Case 3} \(v(x) \ge v(y) > v(z) = v.\)  

\(\lambda \equiv 2 m(z) \equiv 2i \pmod{5}\),  

\(\mu \equiv 3 m(z) \equiv 3i \pmod{5}\).

Combining the above cases, \(P\) and \(Q\) cannot be equal, which leads to a contradiction.
\end{proof}

\bibliographystyle{plain}
\bibliography{main}

\end{document}